\documentclass[10pt,leqno]{amsart}
\usepackage{graphicx}
\baselineskip=16pt

\usepackage{indentfirst,csquotes}

\topmargin= .5cm
\textheight= 20cm
\textwidth= 32cc
\baselineskip=16pt

\evensidemargin= .9cm
\oddsidemargin= .9cm

\usepackage{amsmath}
\usepackage{amssymb}
\usepackage{tikz-cd}
\usepackage{amsfonts}
\usepackage{bm}
\usepackage{enumitem}
\usepackage{mathrsfs}
\usepackage{amsthm}
\usepackage{hyperref}
\newtheorem{theorem}{Theorem}[]
\newtheorem{definition}{Definition}
\newtheorem{example}{Example}

\newtheorem{proposition}{Proposition}
\newtheorem{corollary}{Corollary}

\newcommand{\rb}[1]{\left(#1 \right)}
\newcommand{\sqb}[1]{\left[#1 \right]}
\newcommand{\cb}[1]{\left\{ #1\right\}}

\newcommand{\inv}[1]{#1^{-1}}

\newcommand{\MHZ}[0]{\widetilde{HZ}}
\newcommand{\RHZ}[0]{\mathcal{HZ}}
\newcommand{\RR}[0]{\mathbb{R}}
\newcommand{\ZZ}[0]{\mathbb{Z}}
\newcommand{\C}[0]{\mathscr{C}}
\newcommand{\pma}[4]{\begin{pmatrix}
    #1&#2\\
    #3&#4
\end{pmatrix}}
\DeclareMathOperator{\Li}{Li}
\DeclareMathOperator{\SL}{SL}
\DeclareMathOperator{\M}{M}
\hypersetup{
    colorlinks=true,
    linktoc=all, 
	linkcolor=blue, 
	}

\begin{document}
\title{On the refined Herglotz-Zagier function}
\author{Ziyi Huang}
\date{\today}
\address{University of Science and Technology of China}
\email{ziyihuang@mail.ustc.edu.cn}
\begin{abstract}
We give a functional equation for the refined Herglotz-Zagier function. It is analogous to 
a result in the theory of modular forms.
\end{abstract}
\maketitle

\let\thefootnote\relax
\footnotetext{This paper is directed by Professor Lo\"{i}c MEREL.}

\tableofcontents

\section{Introduction}
Consider the Herglotz-Zagier function
$$HZ\rb{x}=\sum_{n=1}^\infty\frac{1}{n}\rb{\psi\rb{nx}-\log\rb{nx}}\;(x\in \mathbb{C}\setminus\left(-\infty,0\right]),$$
where $\psi$ is the digamma function. It was introduced by Zagier to obtain a Kronecker limit formula for real quadratic fields.
A modification given by the definition
$$\MHZ\rb{x}=HZ\rb{x}-HZ\rb{1}+\frac{\pi^2}{12}\rb{x-2+\inv{x}}-\frac{\log^2x}{4}$$
was introduced to obtain a relation to the Dedekind eta function and the Rogers dilogarithm \cite[(11)]{1}. Recall that the dilogarithm $\Li_2$ is the function
$$\Li_2\rb{x}=\sum_{n\geq 1}\frac{x^n}{n^2}.$$
The Rogers dilogarithm is the function
$$L\rb{x}=\Li_2\rb{x}+\frac{1}{2}\log\rb{x}\log\rb{1-x}\;(0<x<1).$$
It is extended to $\RR_{>0}\setminus\cb{1}$ by
$$L\rb{x}=\frac{\pi^2}{3}-L\rb{\frac{1}{x}}\;(x>1).$$
To better express the Kronecker limit formula for real quadratic fields, a refinement of the modified function $\MHZ$ was introduced as follows
$$\RHZ\rb{x,y}=\MHZ\rb{x}-\MHZ\rb{y}+L\rb{\frac{y}{x}}\;(x>0,y>0,x\neq y).$$

\section{Statement of the result}
    We need some preparation before stating the result.
    For an integer $l$, we define a set
    $$S_l=\cb{M=\begin{pmatrix}
        a&b\\
        c&d
    \end{pmatrix}\colon M\in \M_2\rb{\mathbb{Z}},0\leq b<a,0\leq c<d,ad-bc=l}.$$
    Let $\theta_l\in\mathbb{Z}\sqb{S_l}$ be the formal sum
    $$\theta_l=\sum_{M\in S_l}M.$$
    Let $\M_2\rb{\ZZ}_l\subset \M_2\rb{\ZZ}$ be the set of matrices of determinant $l$. Then $\SL_2$ acts on $\M_2\rb{\ZZ}_l$ by right multiplication.
    We make an important definition.
    \begin{definition}
        Suppose $n$ is an integer. Let $T=\sum_{M}u_M\rb{M}\in\ZZ\sqb{\M_2\rb{\ZZ}_n}$ be a formal sum. We say that $T$ satisfies the $\C_n$ condition
        if we have in $\ZZ\sqb{\mathbb{P}_{\RR}^1}$
        $$\sum_{M\in\alpha}u_M\rb{\rb{M\infty}-\rb{M0}}=\rb{\infty}-\rb{0},$$
        where for a matrix $M\in\M_2\rb{\ZZ}_n$, we denote
        $$M\infty=M\cdot\begin{pmatrix}
            1\\0
        \end{pmatrix}, M0=M\cdot\begin{pmatrix}
            0\\1
        \end{pmatrix}.$$
    \end{definition}
\begin{example}
    If $l=3$, then
    $$\theta_l=\pma{3}{0}{0}{1}+\pma{3}{1}{0}{1}+\pma{1}{0}{1}{3}+\pma{3}{2}{0}{1}+\pma{2}{1}{1}{2}+\pma{1}{0}{2}{3}+\pma{1}{0}{0}{3}$$
    satisfies $\C_3$.
\end{example}
\begin{definition}
    For a function $f\rb{x_1,\cdots,x_n}$ with values in $\mathbb{C}$, we define
    $$f|_{\theta_l}\rb{x_1,\cdots,x_n}=\sum_{M\in S_l}f|_M\rb{x_1,\cdots,x_n}$$
    where if $M=\pma{a}{b}{c}{d}$, then we define
    $$f|_{M}\rb{x_1,\cdots,x_n}=f\rb{\frac{ax_1+c}{bx_1+d},\cdots,\frac{ax_n+c}{bx_n+d}}.$$
\end{definition}
Note that our definitions are essentially the same as those in \cite{1}, up to transposition of matrices.
Now we can state the main result of this paper.
\begin{theorem}\label{thm}
    Let $l$ be a prime number. If $\theta_l$ satisfies $\C_l$, then we have the functional equation
    $$\RHZ|_{\theta_l}\rb{x,y}=\rb{l+1}\RHZ\rb{x,y}+\frac{l-1}{2}\log\rb{l}\log\rb{\frac{x}{y}}\mod\zeta\rb{2}.$$
\end{theorem}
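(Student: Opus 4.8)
The plan is to use the linearity of the slash operator together with the additive decomposition $\RHZ\rb{x,y}=\MHZ\rb{x}-\MHZ\rb{y}+L\rb{y/x}$ to split the statement into a one--variable Herglotz sum and a Rogers--dilogarithm sum, and to treat each by the same mechanism, in which the $\C_l$ condition controls the behaviour at the cusps. Because $|_M$ feeds one and the same M\"obius transformation into every argument, $|_{\theta_l}$ is linear and preserves this decomposition. Writing, for $M=\pma{a}{b}{c}{d}\in S_l$,
$$g_M\rb{x,y}=\frac{\rb{ay+c}\rb{bx+d}}{\rb{ax+c}\rb{by+d}}$$
for the ratio of the two transformed arguments, I would record
$$\RHZ|_{\theta_l}\rb{x,y}=\MHZ|_{\theta_l}\rb{x}-\MHZ|_{\theta_l}\rb{y}+\sum_{M\in S_l}L\rb{g_M\rb{x,y}},$$
so that it suffices to analyse the diagonal Herglotz sum and the dilogarithm sum separately.

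For the Herglotz sum I would invoke the one--variable Hecke functional equation for $\MHZ$ from \cite{1} (or re-derive it by the same boundary analysis), valid when $\theta_l$ satisfies $\C_l$, of the shape $\MHZ|_{\theta_l}\rb{x}\equiv\rb{l+1}\MHZ\rb{x}+c_l\log\rb{x}\pmod{\zeta\rb{2}}$ for an explicit constant $c_l$ proportional to $\log\rb{l}$. The $\C_l$ condition is what makes the boundary data at the cusps $M\infty,M0$ telescope to those of $\cb{0,\infty}$, forcing the Hecke eigenvalue $l+1$; the surviving logarithm originates from the $\log\rb{nx}$ in the series for $HZ$ and the scaling by the determinant $l$, while the correction terms $\frac{\pi^2}{12}\rb{x-2+\inv{x}}$ and $-\frac{\log^2x}{4}$ built into $\MHZ$ are exactly what removes the would--be rational and $\log^2$ anomalies. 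Subtracting the same relation in $y$ contributes $\rb{l+1}\rb{\MHZ\rb{x}-\MHZ\rb{y}}$ together with a logarithmic term in $\log\rb{x/y}$.

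For the dilogarithm sum I would work modulo $\zeta\rb{2}$, where the inversion and reflection constants vanish ($L\rb{t}+L\rb{1/t}=\frac{\pi^2}{3}$ and $L\rb{t}+L\rb{1-t}=\frac{\pi^2}{6}$) and $L$ becomes a homomorphism on the Bloch group, so the five--term relation holds without error term. The key elementary computation is that, since $ad-bc=l$,
$$1-g_M\rb{x,y}=\frac{l\rb{x-y}}{\rb{ax+c}\rb{by+d}},$$
so the determinant $l$ enters explicitly; under the boundary map $\sqb{t}\mapsto t\wedge\rb{1-t}$ this is the precise origin of the $\log\rb{l}$. I would then compute the boundary of the $0$--cycle $\sum_{M\in S_l}\sqb{g_M\rb{x,y}}-\rb{l+1}\sqb{y/x}$: the linear forms $ax+c,bx+d$ and $ay+c,by+d$ are the homogeneous coordinates of the transformed arguments, built from the cusps $M\infty$ and $M0$, and I expect the $\C_l$ hypothesis to supply exactly the relation among these cusps that collapses the part of the boundary not involving $l$ to that of $\rb{l+1}\sqb{y/x}$, leaving only the terms wedged with the constant $l$. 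Evaluating $L$ on what remains should then yield $\rb{l+1}L\rb{y/x}$ plus a term proportional to $\log\rb{l}\log\rb{x/y}$, the additive constant being fixed by letting $y\to x$ (where each term tends to $L\rb{1}=\zeta\rb{2}\equiv0$).

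The main obstacle is this dilogarithm step: organising the $\abs{S_l}$ summands into five--term relators in $F^\times=\mathbf{k}\rb{x,y}^\times$ and showing cleanly that $\C_l$ is the exact hypothesis under which all cusp (tame--symbol) contributions cancel, so that only the $\log\rb{l}$ terms survive. Adding the two reductions and checking that the logarithmic contributions combine to $\frac{l-1}{2}\log\rb{l}\log\rb{x/y}$ then gives $\RHZ|_{\theta_l}\rb{x,y}\equiv\rb{l+1}\RHZ\rb{x,y}+\frac{l-1}{2}\log\rb{l}\log\rb{x/y}\pmod{\zeta\rb{2}}$, as required.
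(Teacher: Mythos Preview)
Your decomposition $\RHZ|_{\theta_l}=\MHZ|_{\theta_l}(x)-\MHZ|_{\theta_l}(y)+\sum_{M}L(g_M)$ is correct and is also the paper's starting point. The genuine gap is the next step: you assert that the one--variable Hecke relation for $\MHZ$ from \cite{1} has the shape
$\MHZ|_{\theta_l}(x)\equiv(l+1)\MHZ(x)+c_l\log x\pmod{\zeta(2)}$.
It does not. The Radchenko--Zagier identity (equation~(21) of \cite{1}, recorded above as~\eqref{RZeq}) reads
\[
\MHZ|_{\theta_l}(x)-(l+1)\MHZ(x)=\sum_{M\in S_l^+}\Bigl(L\Bigl(\tfrac{x+c/a}{x+d/b}\Bigr)-L\Bigl(\tfrac{1+c/a}{1+d/b}\Bigr)\Bigr)+\tfrac{l-1}{2}\log(l)\log(x),
\]
and the dilogarithm terms on the right are genuine functions of $x$, not constants modulo $\zeta(2)$; already for $l=2$ the single surviving term is $L\bigl(\tfrac{x}{x+1}\bigr)$, which is certainly not of the form $\mathrm{const}\cdot\log x$. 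Consequently neither of the two pieces you propose to treat in isolation --- the Herglotz difference and the sum $\sum_M L(g_M)$ --- satisfies a clean identity on its own. In particular your Bloch--group heuristic for $\sum_M[g_M]-(l+1)[y/x]$ cannot close up: that $0$--cycle is \emph{not} in the kernel of the boundary map, because the missing counterterms are precisely the $x$-- and $y$--dependent dilogarithms coming from the Herglotz side.

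The mechanism the paper uses, and which your outline does not reach, is exactly this interaction. For each $M\in S_l^+$ one has three dilogarithms in hand: $L\bigl(\tfrac{x+c/a}{x+d/b}\bigr)$ and $-L\bigl(\tfrac{y+c/a}{y+d/b}\bigr)$ from \eqref{RZeq} at $x$ and at $y$, together with $L(g_M(x,y))$. One application of the five--term relation to this triple collapses it to $L\bigl(\tfrac{bx+d}{by+d}\bigr)-L\bigl(\tfrac{ax+c}{ay+c}\bigr)$. Only now does the $\C_l$ hypothesis enter, and concretely rather than heuristically: it organises $S_l^+$ into chains $C_n$ along which the column $(b,d)$ of one matrix equals the column $(a,c)$ of the next, so these differences telescope over each chain to $L\bigl(\tfrac{x+x_n}{y+x_n}\bigr)-L\bigl(\tfrac{x}{y}\bigr)$, which then cancels against the explicit contribution of $S_l\setminus S_l^+$. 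Your plan to separate the Herglotz and dilogarithm contributions must therefore be abandoned; the two have to be recombined before the five--term relation can be applied.
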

Merel proved in \cite[Proposition 20]{2} that for all positive integers $n$, $\theta_n$ satisfies $\C_n$, hence Theorem \ref{thm} holds for all prime number $l$.
\begin{corollary}
    For all prime numbers $l$, we have
    $$\RHZ|_{\theta_l}\rb{x,y}=\rb{l+1}\RHZ\rb{x,y}+\frac{l-1}{2}\log\rb{l}\log\rb{\frac{x}{y}}\mod\zeta\rb{2}.$$
\end{corollary}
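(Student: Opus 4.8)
\section*{Proof proposal}

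The plan is to exploit the additive structure $\RHZ\rb{x,y}=\MHZ\rb{x}-\MHZ\rb{y}+L\rb{y/x}$ and to treat the Herglotz part and the Rogers dilogarithm part separately, reassembling them at the end by means of the $\C_l$ condition. Writing $\RHZ|_{\theta_l}=\sum_{M\in S_l}\RHZ|_M$ and inserting the definition, I would first record that for $M=\pma{a}{b}{c}{d}\in S_l$ the slash action is the M\"obius map $\sigma_M\rb{x}=\frac{ax+c}{bx+d}$, so that
\[
\RHZ|_M\rb{x,y}=\MHZ\!\left(\tfrac{ax+c}{bx+d}\right)-\MHZ\!\left(\tfrac{ay+c}{by+d}\right)+L\!\left(\tfrac{\rb{ay+c}\rb{bx+d}}{\rb{ax+c}\rb{by+d}}\right),
\]
where the dilogarithm argument is exactly $\sigma_M\rb{y}/\sigma_M\rb{x}$, with $\sigma_M\rb{\infty}=M\infty=a/b$ and $\sigma_M\rb{0}=M0=c/d$. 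The goal is to show that the Herglotz sum produces $\rb{l+1}\rb{\MHZ\rb{x}-\MHZ\rb{y}}$ plus explicit logarithmic corrections, while the dilogarithm sum produces $\rb{l+1}L\rb{y/x}$ modulo $\zeta\rb{2}$, the two families of corrections combining into $\frac{l-1}{2}\log\rb{l}\log\rb{x/y}$.

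For the Herglotz part I would return to the defining series $HZ\rb{x}=\sum_{n\geq1}\frac1n\rb{\psi\rb{nx}-\log nx}$. The source of both the eigenvalue $l+1$ and the factor $\log l$ should be the Gauss multiplication formula for the digamma function, $\sum_{j=0}^{l-1}\psi\!\left(z+\tfrac jl\right)=l\,\psi\rb{lz}-l\log l$. Inserting the transformed arguments and interchanging the sum over $n$ with the finite sum over $M\in S_l$ (legitimate after the $\log$-subtraction), the determinant-$l$ transformations regroup through this formula into $l+1$ copies of the original series together with elementary terms in which each occurrence of $\log l$ comes from the $-l\log l$ summand. The normalizing terms $\frac{\pi^2}{12}\rb{x-2+\inv x}$ and $-\frac{\log^2 x}{4}$ in the definition of $\MHZ$ are, as in \cite{1}, precisely calibrated so that the surviving elementary pieces collapse; after subtracting the $y$-contribution the logarithmic remainder is $\frac{l-1}{2}\log\rb{l}\log\rb{x/y}$.

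For the dilogarithm part I would work modulo $\zeta\rb{2}$ from the outset, so that the reflection relations $L\rb{t}+L\rb{1/t}\equiv0$ and $L\rb{t}+L\rb{1-t}\equiv0$ hold and the five-term relation $L\rb{u}+L\rb{v}=L\rb{uv}+L\!\left(\tfrac{u\rb{1-v}}{1-uv}\right)+L\!\left(\tfrac{v\rb{1-u}}{1-uv}\right)$ is available exactly. This is where the $\C_l$ condition enters decisively: the arguments $\sigma_M\rb{y}/\sigma_M\rb{x}$ degenerate to the cusps $M\infty=a/b$ and $M0=c/d$ as $x,y$ approach $\infty,0$, and the identity $\sum_{M\in S_l}\big(\rb{M\infty}-\rb{M0}\big)=\rb{\infty}-\rb{0}$ in $\ZZ\sqb{\mathbb{P}^1_{\RR}}$ is exactly the bookkeeping that forces the telescoping of the dilogarithm corrections to a single boundary contribution. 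Iterating the five-term relation, guided by this divisor identity rather than by an ad hoc choice of arguments, should reduce $\sum_{M\in S_l}L|_M\rb{y/x}$ to $\rb{l+1}L\rb{y/x}$ modulo $\zeta\rb{2}$.

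The hard part will be the reassembly in the last two steps, namely showing that the logarithmic corrections from the Herglotz part combine with the dilogarithm corrections to yield \emph{precisely} $\frac{l-1}{2}\log\rb{l}\log\rb{x/y}$ and nothing else modulo $\zeta\rb{2}$. Two technical points seem most delicate. First, for the representatives giving $x\mapsto\rb{x+c}/l$ the digamma shift $\frac{nc}{l}$ depends on the summation index $n$, so the multiplication formula does not apply termwise; I would split the series according to $n\bmod l$ and absorb the resulting integer shifts using $\psi\rb{z+m}=\psi\rb{z}+\sum_{k=0}^{m-1}\rb{z+k}^{-1}$, which contributes the extra finite harmonic sums that must ultimately cancel. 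Second, organizing the repeated five-term reductions so that they are literally driven by the $\C_l$ identity is the combinatorial heart of the argument. Throughout, the primality of $l$ is what keeps the set $S_l$ small enough to enumerate by two residue classes of transformations and guarantees that the distribution formula applies cleanly.
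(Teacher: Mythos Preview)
Your proposal targets the wrong statement and contains a real gap. In the paper the Corollary follows in one line from Theorem~\ref{thm} together with Merel's result \cite[Proposition~20]{2} that $\theta_n$ satisfies $\C_n$ for every $n$; what you are sketching is a fresh proof of the functional equation of Theorem~\ref{thm} itself. The gap in that sketch is the separation strategy: you claim that the Herglotz sum yields $(l+1)\bigl(\MHZ(x)-\MHZ(y)\bigr)$ plus purely \emph{logarithmic} corrections, and independently that $\sum_{M\in S_l}L\bigl(\sigma_M(y)/\sigma_M(x)\bigr)\equiv(l+1)L(y/x)$ modulo $\zeta(2)$. The second claim is already false for $l=2$, where a direct five-term computation over $S_2=\bigl\{\pma{2}{0}{0}{1},\pma{2}{1}{0}{1},\pma{1}{0}{1}{2},\pma{1}{0}{0}{2}\bigr\}$ gives
\[
\sum_{M\in S_2}L\!\left(\frac{\sigma_M(y)}{\sigma_M(x)}\right)\;\equiv\;3\,L\!\left(\frac{y}{x}\right)+L\!\left(\frac{y}{y+1}\right)-L\!\left(\frac{x}{x+1}\right)\pmod{\zeta(2)},
\]
and the last two terms do not cancel. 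Correspondingly the first claim fails: by the Radchenko--Zagier identity (Theorem~\ref{RZthm}), $\MHZ|_{\theta_l}-(l+1)\MHZ$ carries genuine dilogarithm terms $\sum_{M\in S_l^+}L\bigl(\tfrac{x+c/a}{x+d/b}\bigr)$, not just logarithms. In the paper's argument it is precisely these extra dilogarithms that merge, via the five-term relation, with the $L|_M$ terms into the telescoping expression $L\bigl(\tfrac{bx+d}{by+d}\bigr)-L\bigl(\tfrac{ax+c}{ay+c}\bigr)$, which the chain decomposition coming from $\C_l$ then collapses. Your two halves cannot be closed off separately.

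A smaller but related issue: your Gauss-multiplication plan for the Herglotz side only addresses the triangular representatives $x\mapsto lx$ and $x\mapsto(x+c)/l$. For $l\ge 3$ the set $S_l$ contains matrices with $b,c>0$ (for instance $\pma{2}{1}{1}{2}$ when $l=3$), so one must handle $\psi\bigl(n\cdot\tfrac{ax+c}{bx+d}\bigr)$ with a genuinely fractional-linear argument, to which no digamma distribution relation applies termwise. This is why the paper imports the Radchenko--Zagier formula as a black box rather than recomputing $\MHZ|_{\theta_l}$ from the defining series.
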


\section{Properties}
We will often use the 5-term relation for the Rogers dilogarithm.
\begin{proposition}[The 5-term relation]\label{fiveterm}
    For $x\neq y\in \RR_{>0}\setminus\cb{1}$, we have the 5-term relation
    \begin{equation}\label{reci}
        L\rb{x}-L\rb{y}+L\rb{\frac{y}{x}}-L\rb{\frac{y-1}{x-1}}+L\rb{\frac{1-1/y}{1-1/x}}\equiv 0\mod \zeta\rb{2}.
    \end{equation}
\end{proposition}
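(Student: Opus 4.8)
The plan is to prove \eqref{reci} by differentiation in one variable, using the fact that working modulo $\zeta(2)$ trivialises every constant of integration. The first step is to record the derivative of the Rogers dilogarithm in a form valid uniformly on $\RR_{>0}\setminus\cb{1}$. From $\Li_2'\rb{t}=-\log\rb{1-t}/t$ one computes, for $0<t<1$,
$$L'\rb{t}=-\frac{1}{2}\rb{\frac{\log\rb{1-t}}{t}+\frac{\log t}{1-t}},$$
and differentiating the extension formula $L\rb{t}=\pi^2/3-L\rb{1/t}$ shows that the same expression, with $\log\rb{1-t}$ replaced by $\log\abs{1-t}$, computes $L'\rb{t}$ for $t>1$ as well. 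Along the way I would record the two elementary two-term relations that will be needed: the inversion relation $L\rb{t}+L\rb{1/t}=2\zeta(2)\equiv 0$ (which is just the extension formula, since $\pi^2/3=2\zeta(2)$), and the reflection relation $L\rb{t}+L\rb{1-t}=\zeta(2)$ for $0<t<1$, proved by the one-line observation that $L'\rb{t}-L'\rb{1-t}=0$ followed by evaluation at $t\to 0^+$ using $L\rb{0}=0$ and $L\rb{1^-}=\zeta(2)$.

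Next I would fix $y$ and regard the left-hand side of \eqref{reci} as a function $F\rb{x}$ of $x$ alone. The five arguments in play are $x$, $y/x$, $\rb{y-1}/\rb{x-1}$ and $x\rb{y-1}/\rb{y\rb{x-1}}$, the term $L\rb{y}$ being constant in $x$. Applying the chain rule together with the derivative formula above, each summand becomes a rational function of $x,y$ times a logarithm, and after collecting terms the only transcendental factors that survive are $\log x$, $\log y$, $\log\abs{x-1}$, $\log\abs{y-1}$ and $\log\abs{x-y}$. The heart of the proof is to check that the rational coefficient of each of these five logarithms vanishes identically; this is a direct partial-fraction computation and yields $F'\rb{x}\equiv 0$.

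It follows that $F$ is constant on each connected component of its domain in $x$, the components being cut out by the singular loci $x=0$, $x=1$ and $x=y$. The last step is to identify each constant modulo $\zeta(2)$ by a boundary limit. Letting $x\to y$ sends the last three dilogarithms to $L\rb{1}=\zeta(2)\equiv 0$ while $L\rb{x}-L\rb{y}\to 0$, so $F\equiv 0\mod\zeta(2)$ on the components adjacent to the diagonal; letting $x\to 0^+$ and $x\to\infty$ disposes of the remaining component, where $L\rb{0}=0$ and $L\rb{\infty}=2\zeta(2)\equiv 0$, the surviving combination being $L\rb{1-1/y}-L\rb{y}$, which is $\equiv 0$ by combining the inversion and reflection relations from the first step.

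The main obstacle I anticipate is twofold. The first is the bookkeeping in the differentiation step: one must keep the five logarithmic derivatives and their chain-rule factors organised carefully so that the cancellation of all five rational coefficients is transparent rather than accidental. The second, more conceptual, point is the disconnectedness of the domain: an exact five-term identity would carry different rational multiples of $\pi^2$ on different components, and it is precisely the reduction modulo $\zeta(2)$ — absorbing both the constant $\pi^2/3$ of the extension formula and the jumps of size $\zeta(2)$ incurred as the arguments cross $0$ and $1$ — that collapses all of these constants to $0$ and produces the clean statement \eqref{reci}.
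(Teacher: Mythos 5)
Your argument is necessarily a different route from the paper's: the paper does not prove this proposition at all, but simply cites it as equation (36) of Radchenko--Zagier, so your self-contained analytic proof is the classical differentiation scheme for five-term identities. The skeleton is sound. Your uniform derivative formula is correct (differentiating $L\rb{t}=\pi^2/3-L\rb{1/t}$ does give $L'\rb{t}=-\frac{1}{2}\rb{\frac{\log\abs{1-t}}{t}+\frac{\log t}{1-t}}$ for $t>1$), the inversion and reflection relations are right, the vanishing of the rational coefficients of $\log x$, $\log y$, $\log\abs{x-1}$, $\log\abs{y-1}$, $\log\abs{x-y}$ is the standard (if tedious) verification, and the diagonal limit is clean: the last three terms each tend to $L\rb{1}=\zeta\rb{2}\equiv 0$, and indeed one can check that on $0<y<x<1$ the constant value of $F$ is exactly $\zeta\rb{2}$, consistent with Rogers' exact identity. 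So on every component of the $x$-domain adjacent to $x=y$, your proof is complete.

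The genuine gap is your treatment of the \emph{remaining} component, and it is a domain problem. If $x$ and $y$ lie on opposite sides of $1$ — say $0<y<1<x$ — then $\frac{y-1}{x-1}<0$ and $\frac{1-1/y}{1-1/x}<0$, so with the paper's $L$, defined only on $\RR_{>0}\setminus\cb{1}$, the function $F$ is not even defined on that component; correspondingly, your limits $x\to 0^+$ and $x\to\infty$ produce the value $L\rb{1-1/y}$ at a \emph{negative} argument when $y<1$, and the reflection relation you invoke to kill $L\rb{1-1/y}-L\rb{y}$ was only established on $\rb{0,1}$. Two repairs are available. Either restrict the proposition to $x,y$ on the same side of $1$ — which is all five arguments being positive, all the paper ever uses (in (\ref{cancelling}) the relation is applied with both principal arguments $u/\rb{u+l}$ and $v/\rb{v+l}$ in $\rb{0,1}$), and in that reading every component of the $x$-domain touches the diagonal, so your limit step already covers everything and the $x\to 0^+$, $x\to\infty$ analysis can be deleted. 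Or, if you want the statement for all $x\neq y\in\RR_{>0}\setminus\cb{1}$ as literally written, first extend $L$ to $\mathbb{P}^1\rb{\RR}$ with values in $\RR/\zeta\rb{2}\ZZ$ (as Radchenko and Zagier do), verify that inversion and reflection then hold identically mod $\zeta\rb{2}$, and only then run your boundary-limit computation on the mixed component; as written, that step silently assumes the extension.
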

\begin{proof}
    This is equation (36) in \cite{1}.
\end{proof}
Since $\zeta\rb{2}=\pi^2/6$, we have another useful equation following directly from the definition
\begin{equation}
    L\rb{x}+L\rb{\frac{1}{x}}\equiv 0\mod\zeta\rb{2}.
\end{equation}
Let $S_l^+=\cb{M=\pma{a}{b}{c}{d}\colon M\in S_l,b>0}$.
\begin{theorem}[Radchenko and Zagier]\label{RZthm}
    For all $n\in\ZZ_{\geq 0}$ and $x>0$ we have the functional equation
    $$\MHZ|_{\theta_n}\rb{x}-\sigma\rb{n}\MHZ\rb{x}=\sum_{M\in S_n^+}\rb{L\rb{\frac{x+c/a}{x+d/b}}-L\rb{\frac{1+c/a}{1+d/b}}}+\frac{1}{2}\sum_{r\mid n}r\log\rb{\frac{r^2}{n}}\log\rb{x}.$$
\end{theorem}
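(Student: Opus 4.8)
The plan is to prove the identity as an equality of real-analytic functions on $\rb{0,\infty}$ by pinning it down at the single point $x=1$ and then showing that the two sides have the same derivative. Write $E\rb{x}$ for the difference of the two sides. At $x=1$ every bracket $L\rb{\frac{x+c/a}{x+d/b}}-L\rb{\frac{1+c/a}{1+d/b}}$ vanishes, the factor $\log x$ vanishes, and $\MHZ\rb{1}=0$, so $E\rb{1}=\MHZ|_{\theta_n}\rb{1}=\sum_{M\in S_n}\MHZ\rb{\frac{a+c}{b+d}}$. I would evaluate this using the involution $\pma{a}{b}{c}{d}\mapsto\pma{d}{c}{b}{a}$ of $S_n$ (it preserves the determinant and the inequalities $0\le b<a$, $0\le c<d$), which replaces $\frac{a+c}{b+d}$ by its reciprocal, together with the reflection formula $\MHZ\rb{t}+\MHZ\rb{1/t}=0$, valid for the modified Herglotz function since the correction terms in its definition are chosen precisely to produce it. Pairing the summands then gives $E\rb{1}=0$.

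Next I would differentiate. Since $\frac{d}{dx}\frac{ax+c}{bx+d}=\frac{n}{\rb{bx+d}^2}$ is exactly the weight-two automorphy factor, differentiating $\MHZ|_{\theta_n}$ converts the weight-zero slash into the weight-two Hecke action applied to $\MHZ'$, and the constants $L\rb{\frac{1+c/a}{1+d/b}}$ drop out. Writing $\MHZ'\rb{y}=HZ'\rb{y}+\frac{\pi^2}{12}\rb{1-y^{-2}}-\frac{\log y}{2y}$ and
$$HZ'\rb{x}=\sum_{m\geq1}\rb{\psi'\rb{mx}-\frac{1}{mx}},$$
the claim splits into a Hecke relation for $HZ'$ and an elementary identity. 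The elementary part is to be verified directly: the derivatives $L'\rb{u}=-\frac{1}{2}\rb{\frac{\log\rb{1-u}}{u}+\frac{\log u}{1-u}}$ of the Rogers terms, the contributions of $\frac{\pi^2}{12}\rb{1-y^{-2}}$ and $\frac{\log y}{2y}$, and the term $\frac{1}{2x}\sum_{r\mid n}r\log\rb{r^2/n}$ must combine into an identity of rational and logarithmic functions.

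The heart of the proof is the identity
$$\sum_{M\in S_n}\frac{n}{\rb{bx+d}^2}HZ'\rb{\frac{ax+c}{bx+d}}=\sigma\rb{n}HZ'\rb{x}+\rb{\text{rational-logarithmic terms}}.$$
Here the eigenvalue $\sigma\rb{n}$ has a conceptual source: writing $\psi'\rb{s}=\sum_{k\geq0}\rb{s+k}^{-2}$ exhibits $HZ'$ as a one-sided specialization of a weight-two Eisenstein-type lattice sum, for which $\sigma\rb{n}=\sum_{d\mid n}d$ is precisely the eigenvalue of the $n$-th Hecke operator. Concretely I would feed in Gauss's multiplication formula in its differentiated form $\sum_{c=0}^{d-1}\psi'\rb{\frac{w+c}{d}}=d^2\psi'\rb{w}$, applied to the full convergent sum rather than to its $\psi'$ and $\frac{1}{mx}$ pieces separately, so that the upper-triangular matrices of $S_n$ assemble into $\sigma\rb{n}HZ'\rb{x}$. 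The remaining matrices, those with $b>0$, are controlled by the $\C_n$ condition (valid for $\theta_n$ by Merel): it says the cusps $M0,M\infty$ run through consecutive vertices of the Farey chain from $0$ to $\infty$, and this adjacency is what lets the off-diagonal trigamma contributions telescope into the rational-logarithmic remainder indexed by $S_n^+$. Integrating the remainder back recovers the Rogers dilogarithms as antiderivatives of $L'$, with Proposition \ref{fiveterm} used to cast the result in the stated form, and the integration constant is fixed by the first step.

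The main obstacle is this last reorganization. The series defining $HZ'$ are only conditionally convergent, so the upper-triangular and the $b>0$ matrices cannot be separated naively; doing so produces the wrong, divisor-counting weight, and the multiplication formula must instead be applied to lattice sums that are kept intact. The boundary terms that distinguish the one-sided sum from a genuine Eisenstein series are exactly what generate the dilogarithm coboundary, and verifying that these boundary contributions, organized along the Farey chain supplied by $\C_n$, reproduce precisely the terms $L\rb{\frac{x+c/a}{x+d/b}}-L\rb{\frac{1+c/a}{1+d/b}}$ and the logarithmic term is where essentially all of the work lies.
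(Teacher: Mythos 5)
The paper does not actually prove this theorem: its ``proof'' is the single line ``This is equation (21) in \cite{1}'', i.e.\ the result is imported wholesale from Radchenko--Zagier. So the relevant question is whether your proposal constitutes a self-contained proof, and it does not. The architecture is sensible: pinning the identity at $x=1$ works (your involution $\pma{a}{b}{c}{d}\mapsto\pma{d}{c}{b}{a}$ does preserve $S_n$, its fixed points $a=d,\,b=c$ contribute $\MHZ\rb{1}=0$, and the pairs cancel via $\MHZ\rb{t}+\MHZ\rb{1/t}=0$); differentiating the weight-zero slash does produce the weight-two factor $n/\rb{bx+d}^2$; and your scheme of proving an exact identity of derivatives and re-fixing all constants at $x=1$ even neutralizes the fact that the 5-term relation is only stated modulo $\zeta\rb{2}$, since 5-term rearrangements perturb the antiderivative only by constants. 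These checks are correct as far as they go.

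The gap is that the step you yourself label as ``where essentially all of the work lies'' is never carried out, and it is the entire theorem. The claimed identity
$$\sum_{M\in S_n}\frac{n}{\rb{bx+d}^2}\,HZ'\!\rb{\frac{ax+c}{bx+d}}=\sigma\rb{n}\,HZ'\rb{x}+R\rb{x}$$
is asserted with $R$ unspecified, yet the theorem's content is precisely that $R$ equals the derivative of $\sum_{M\in S_n^+}L\rb{\frac{x+c/a}{x+d/b}}$ plus the exact constant $\frac{1}{2x}\sum_{r\mid n}r\log\rb{r^2/n}$. Since the double sum $\sum_{m,k}\rb{mx+k}^{-2}$ underlying $HZ'$ is not absolutely convergent (only the regularized sum $\sum_m\rb{\psi'\rb{mx}-\frac{1}{mx}}$ converges), the separation of $S_n$ into triangular and $b>0$ matrices, the application of the differentiated multiplication formula, and the extraction of the divisor constant all hinge on a rearrangement whose legitimacy and outcome are exactly what must be proved; appealing to ``boundary terms organized along the Farey chain'' names the difficulty without resolving it. Note also that the chain structure you invoke is established in the paper (Proposition \ref{str}) only for prime $l$, whereas the theorem is for all $n\geq 0$, where the decomposition of $S_n$ is more intricate. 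As it stands this is a plausible research plan whose elementary bookends are verified, not a proof of the statement.
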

\begin{proof}
    In the sum, $a,b,c,d$ are such that $M=\pma{a}{b}{c}{d}$, we will use this convention in the sequel. This is equation (21) in \cite{1}.
\end{proof}
\begin{corollary}
    If $l$ is a prime number, then we have
    \begin{equation}\label{RZeq}
        \MHZ|_{\theta_l}\rb{x}-\rb{l+1}\MHZ\rb{x}=\sum_{M\in S_l^+}\rb{L\rb{\frac{x+c/a}{x+d/b}}-L\rb{\frac{1+c/a}{1+d/b}}}+\frac{l-1}{2}\log\rb{l}\log\rb{x}.
    \end{equation}
\end{corollary}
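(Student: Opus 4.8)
The plan is to specialize the Radchenko--Zagier functional equation (Theorem \ref{RZthm}) to the case $n = l$ and simplify the two $l$-dependent quantities that appear, namely $\sigma\rb{l}$ and the logarithmic coefficient $\frac{1}{2}\sum_{r \mid l} r\log\rb{r^2/l}$. Since $l$ is prime, both simplifications are immediate, and the sum over $M \in S_l^+$ on the right-hand side is already in the desired form at $n = l$.

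First I would observe that the only divisors of $l$ are $1$ and $l$, so $\sigma\rb{l} = 1 + l = l+1$; this turns the term $\sigma\rb{n}\MHZ\rb{x}$ into $\rb{l+1}\MHZ\rb{x}$, matching the left-hand side of the claimed equation. The first summation, being stated for general $n$, carries over verbatim once we set $n = l$, so no manipulation is required there.

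The remaining step is to evaluate the divisor sum $\frac{1}{2}\sum_{r\mid l} r\log\rb{r^2/l}\log\rb{x}$. With only $r = 1$ and $r = l$ contributing, the $r=1$ term gives $1\cdot\log\rb{1/l} = -\log\rb{l}$ and the $r = l$ term gives $l\cdot\log\rb{l^2/l} = l\log\rb{l}$. Summing and halving yields $\frac{1}{2}\rb{l-1}\log\rb{l}\log\rb{x} = \frac{l-1}{2}\log\rb{l}\log\rb{x}$, exactly the logarithmic correction in the statement.

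Since every step is a direct evaluation once primality is invoked, there is no genuine obstacle here: the corollary is nothing more than the specialization of Theorem \ref{RZthm} to prime $n = l$. The only point requiring any care is the bookkeeping of the divisor sum, and in particular the sign of the $r=1$ contribution, which relies on $\log\rb{1/l} = -\log\rb{l}$ producing the cancellation that leaves the clean coefficient $\frac{l-1}{2}\log\rb{l}$.
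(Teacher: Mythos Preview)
Your proof is correct and is exactly the intended argument: the paper states this corollary without a separate proof precisely because it is the direct specialization of Theorem~\ref{RZthm} to prime $n=l$, using $\sigma(l)=l+1$ and the two-term divisor sum computation you carried out.
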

Now we study the structure of $S_l$ for prime numbers $l$ such that $\theta_l$ satisfies $\C_l$.
A subset $C\subset S_l$ is called a chain if $C=\cb{M_0,\cdots,M_n}$ such that
$$M_i0=M_{i+1}\infty$$
for all $0\leq i\leq n-1$. It is easy to see that different chains are disjoint. For $1\leq n\leq l-1$, let $C_n$ be the chain such that
$$\pma{l}{n}{0}{1}\in C_n.$$
Then it is easy to see that there exists a unique $1\leq x_n\leq l-1$ such that
$$\pma{1}{0}{x_n}{l}\in C_n.$$
Note also that $x_m\neq x_n$ if $m\neq n$.
Since $\theta_l$ satisfies $\C_l$, we have the following structure.
\begin{proposition}\label{str}
    If $l$ is prime and $\theta_l$ satisfies $\C_l$, then $S_l$ is a disjoint union
    $$S_l=\cb{\pma{l}{0}{0}{1}}\bigsqcup \cb{\pma{1}{0}{0}{l}}\bigsqcup C_1\bigsqcup\cdots\bigsqcup C_{l-1}.$$
\end{proposition}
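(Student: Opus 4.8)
The plan is to model $S_l$ as the edge set of a directed graph on $\mathbb{P}_{\RR}^1$: to each $M=\pma{a}{b}{c}{d}\in S_l$ attach the directed edge from $M\infty=a/c$ to $M0=b/d$, so that a chain $\cb{M_0,\cdots,M_n}$ is exactly a head-to-tail directed path. First I would locate the source and the sink. Since $0\leq c<d$ forces $d\geq 1$, no matrix has $M0=\infty$, and since $0\leq b<a$ forces $a\geq 1$, no matrix has $M\infty=0$; thus $\infty$ is a pure source and $0$ a pure sink. Counting the matrices with $M\infty=\infty$ (equivalently $c=0$, hence $ad=l$, hence $\cb{a,d}=\cb{1,l}$) yields exactly the $l+1$ matrices $\pma{l}{b}{0}{1}$ for $0\leq b\leq l-1$ together with $\pma{1}{0}{0}{l}$, so $\infty$ has out-degree $l+1$; symmetrically the $l+1$ matrices with $b=0$ give $0$ an in-degree $l+1$. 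Finally I would record the monovariant $M\infty-M0=a/c-b/d=l/(cd)>0$ (with the evident reading when $b=0$ or $c=0$): all traced points lie in $[0,\infty]$ and each edge strictly decreases this value, so no directed cycle can occur and every chain is a strictly descending finite path.

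Next I would show that at each vertex the incoming and outgoing edges are unique, which is what makes the chains well defined and pairwise disjoint. Writing a point $\neq\infty$ as $p/q$ in lowest terms, any $M$ with $M\infty=p/q$ has $(a,c)$ a positive integer multiple of $(p,q)$; the factorisation forced by $ad-bc=l$ with $l$ prime, together with $0\leq b<a$ and $c<d$, leaves only $(a,c)=(p,q)$, after which $b$ is determined modulo $p$ and $d$ follows. Hence at most one matrix attains a given value of $M\infty$, so every vertex $\neq\infty$ has out-degree at most one; the same computation applied to $M0=b/d$ gives in-degree at most one at every vertex $\neq 0$. Consequently the successor and predecessor of a matrix in a chain are unique whenever they exist, so ``the chain containing $M$'' is well defined and different chains are disjoint, recovering the remark quoted in the text.

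The crucial input is now the $\C_l$ condition. Reading off the coefficient of a point $P\neq 0,\infty$ in $\sum_{M\in S_l}\rb{\rb{M\infty}-\rb{M0}}$, the condition says that the number of matrices with $M\infty=P$ equals the number with $M0=P$; combined with the degree bounds above, this means an intermediate point $P$ has an outgoing edge if and only if it has an incoming edge. Therefore a maximal chain can neither begin nor end at an interior point: traced backward it must reach the pure source $\infty$, and traced forward it must reach the pure sink $0$. Every maximal chain is thus a full descending path from $\infty$ to $0$; by the out-degree bound each is determined by its unique first edge, so there are exactly $l+1$ of them, they are edge-disjoint, and since every matrix traces back to $\infty$ and forward to $0$ they cover all of $S_l$.

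It then remains to identify these $l+1$ paths. The two edges $\pma{l}{0}{0}{1}$ and $\pma{1}{0}{0}{l}$ out of $\infty$ already satisfy $M0=0$, so each is a length-one chain $\infty\to 0$ admitting no predecessor or successor; neither equals any $C_n$, which by definition contains $\pma{l}{n}{0}{1}$ with $n\geq 1$. The remaining $l-1$ edges out of $\infty$ are precisely the $\pma{l}{n}{0}{1}$ for $1\leq n\leq l-1$, and the maximal path beginning with such an edge is by definition $C_n$, terminating at the unique $\pma{1}{0}{x_n}{l}$ with $M0=0$; distinct values of $n$ give distinct chains because $x_m\neq x_n$ for $m\neq n$. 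As these $l+1$ edge-disjoint paths partition $S_l$, the asserted decomposition follows. The main obstacle is the interplay in the third paragraph: the degree bounds alone permit a chain to stall at an interior point, and it is exactly the $\C_l$ condition that forbids this and forces every chain to run the full length from $\infty$ down to $0$.
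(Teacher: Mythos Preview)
Your argument is correct and follows essentially the same route as the paper: use the $\C_l$ condition to guarantee that every matrix with $bc\neq 0$ has both a predecessor and a successor in $S_l$, and then descend until reaching a matrix with $c=0$ going backward (i.e.\ starting at $\infty$) and $b=0$ going forward (i.e.\ ending at $0$). Your graph-theoretic framing and the explicit verification that in- and out-degrees are at most one make the argument more self-contained than the paper's terse inductive sketch, but the underlying descent mechanism is the same.
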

\begin{proof}
    If $T=\pma{a}{b}{c}{d}\in S_l$ such that $bc\neq 0$, then there must exist matrices $M,N\in S_l$ such that
    $M\infty=T0,N0=T\infty$. If $M=\pma{b}{b_M}{d}{d_M}$ and $N=\pma{a_N}{a}{c_N}{c}$, then we see that $c_N<c,b_M<b$. We continue by induction and obtain the result.
\end{proof}
\section{Proof of Theorem 1}
\begin{proof}
    By (\ref{RZeq}), we have
    \begin{align}\label{calcul}
        \begin{split}
        &\RHZ|_{\theta_l}\rb{x,y}-\rb{l+1}\RHZ\rb{x,y}\\
        =&\rb{\MHZ|_{\theta_l}\rb{x}-\rb{l+1}\MHZ\rb{x}}-\rb{\MHZ|_{\theta_l}\rb{y}-\rb{l+1}\MHZ\rb{y}}\\
        &+\sum_{M\in S_l}L\rb{\frac{ay+c}{by+d}\frac{bx+d}{ax+c}}-\rb{l+1}L\rb{\frac{y}{x}}\\
        =&\sum_{M\in S_l^+}\rb{L\rb{\frac{x+c/a}{x+d/b}}-L\rb{\frac{y+c/a}{y+d/b}}+L\rb{\frac{ay+c}{by+d}\frac{bx+d}{ax+c}}}+\frac{l-1}{2}\log\rb{l}\log\rb{\frac{x}{y}}\\
        &+\sum_{M\in S_l\setminus S_l^+}L\rb{\frac{ay+c}{by+d}\frac{bx+d}{ax+c}}-\rb{l+1}L\rb{\frac{y}{x}}.
        \end{split}
    \end{align}
    Now for any $M\in S_l^+$, we calculate the sum using the 5-term relation (\ref{fiveterm}). Let $u=bax+bc,v=bay+bc$, we have
    \begin{align}\label{cancelling}
        \begin{split}
            &L\rb{\frac{x+c/a}{x+d/b}}-L\rb{\frac{y+c/a}{1+d/b}}+L\rb{\frac{ay+c}{by+d}\frac{bx+d}{ax+c}}\\
            =&L\rb{\frac{u}{u+l}}-L\rb{\frac{v}{v+l}}+L\rb{\frac{v\rb{u+l}}{u\rb{v+l}}}\\
            =&L\rb{\frac{u+l}{v+l}}-L\rb{\frac{u}{v}}\\
            =&L\rb{\frac{bx+d}{by+d}}-L\rb{\frac{ax+c}{ay+c}}.
        \end{split}
    \end{align}
    A key observation is that, by the $\C_l$ property, many terms will cancel by summing (\ref{cancelling}) over a chain $C_n$. In fact, we obtain a very simple result
    \begin{equation}
        \sum_{M\in C_n\cap S_l^+}\rb{L\rb{\frac{bx+d}{by+d}}-L\rb{\frac{ax+c}{ay+c}}}=L\rb{\frac{x+x_n}{y+x_n}}-L\rb{\frac{x}{y}}.
    \end{equation}
    Then summing over $S_l^+$, we obtain
    \begin{align}\label{sum1}
        \begin{split}
            \sum_{M\in S_l^+}\rb{L\rb{\frac{bx+d}{by+d}}-L\rb{\frac{ax+c}{ay+c}}}&=\sum_{n=1}^{l-1}\sum_{M\in C_n\cap S_l^+}\rb{L\rb{\frac{bx+d}{by+d}}-L\rb{\frac{ax+c}{ay+c}}}\\
            &=\sum_{n=1}^{l-1}\rb{L\rb{\frac{x+x_n}{y+x_n}}-L\rb{\frac{x}{y}}}\\
            &=\sum_{n=1}^{l-1}L\rb{\frac{x+n}{y+n}}-\rb{l-1}L\rb{\frac{x}{y}}.
        \end{split}
    \end{align}
    We now calculate the other sum in (\ref{calcul}). By Proposition \ref{str}, we see that
    $$S_l\setminus S_l^+=\cb{\pma{l}{0}{0}{1},\pma{1}{0}{0}{l},\pma{1}{0}{1}{l},\cdots,\pma{1}{0}{l-1}{l}}.$$
    Hence we have
    \begin{equation}\label{sum2}
        \sum_{M\in S_l\setminus S_l^+}L\rb{\frac{ay+c}{by+d}\frac{bx+d}{ax+c}}=\sum_{n=1}^{l-1}L\rb{\frac{y+n}{x+n}}+2L\rb{\frac{y}{x}}.
    \end{equation}
    Finally, we combine (\ref{reci}) and equations (\ref{calcul}), (\ref{sum1}), (\ref{sum2}) to obtain
    $$\RHZ|_{\theta_l}\rb{x,y}=\rb{l+1}\RHZ\rb{x,y}+\frac{l-1}{2}\log\rb{l}\log\rb{\frac{x}{y}}\mod\zeta\rb{2}.$$
\end{proof}

\section{Acknowledgements}
I sincerely thank Professor Lo\"{i}c MEREL, my commutative algebra professor of the China-France Mathematics Talents Class at the University of Science and Technology of China, for giving me this project and many helpful comments.
$\,$

$\,$


\begin{thebibliography}{99}

\bibitem{1} D. Radchenko and D. Zagier. \emph{Arithmetic properties of the Herglotz function}. {\tt arXiv:2012.15805 [math.NT]}
\bibitem{2} L. Merel. \emph{Universal Fourier expansions of modular forms}, in: On Artin's Conjecture for Odd 2-dimensional Representations, Springer, 1994, pp. 59-94.
\bibitem{3} W. Duke, Ö. Imamoḡlu, and Á. Tóth. \emph{Kronecker's first limit formula, revisited}. Res Math Sci 5, 20 (2018).

\end{thebibliography}
\end{document}